\documentclass[11pt]{article}
\usepackage[english]{babel}
\usepackage{amssymb,amsmath,amsthm}
\textwidth=178truemm \textheight=250truemm \voffset-2.5truecm
\hoffset-2.5truecm
\parindent=16pt
\hfuzz10pt

\newtheorem{theorem}{Theorem}[section]
\newtheorem{lemma}[theorem]{Lemma}
\newtheorem{proposition}[theorem]{Proposition}

{\theoremstyle{definition}}
{\theoremstyle{definition}}

\numberwithin{equation}{section}

\def\C{{\mathbb C}}
\def\N{{\mathbb N}}
\def\Z{{\mathbb Z}}
\def\R{{\mathbb R}}

\def\T{{\mathbb T}}

\def\hh{{\mathcal H}}

\def\epsilon{\varepsilon}
\def\phi{\varphi}
\def\leq{\leqslant}
\def\geq{\geqslant}

\title{A complete locally convex space of countable
dimension admitting an operator with no invariant subspaces}

\author{Stanislav Shkarin}

\date{}

\begin{document}

\maketitle

\begin{abstract}We construct a complete locally convex topological
vector space $X$ of countable algebraic dimension and a continuous
linear operator $T:X\to X$ such that $T$ has no non-trivial closed
invariant subspaces.
\end{abstract}
\small \noindent{\bf MSC:} \ \ 47A16

\noindent{\bf Keywords:} \ \ Cyclic operators; invariant subspaces;
topological vector spaces  \normalsize

\section{Introduction \label{s1}}\rm

All vector spaces in this article are over the field $\C$ of complex
numbers. As usual, $\R$ is the field of real numbers,
$\T=\{z\in\C:|z|=1\}$, $\N$ is the set of positive integers and
$\Z_+=\N\cup\{0\}$. Throughout the article, all topological spaces
{\it are assumed to be Hausdorff}. For a topological vector space
$X$, $L(X)$ is the algebra of continuous linear operators on $X$ and
$X'$ is the space of continuous linear functionals on $X$. For $T\in
L(X)$, the dual operator $T':X'\to X'$ is defined as usual:
$T'f=f\circ T$.

We say that a topological vector space $X$ has the {\it invariant
subspace property} if every $T\in L(X)$ has a non-trivial
(=different from $\{0\}$ and $X$) closed invariant subspace. The
problem  whether $\ell_2$ has the invariant subspace property is
known as the invariant subspace problem and remains perhaps the
greatest open problem in operator theory. It is worth noting that
Read \cite{read1} and Enflo \cite{enf} (see also \cite{isp-book})
showed independently that there are separable infinite dimensional
Banach spaces, which do not have the invariant subspace property. In
fact, Read \cite{read2} demonstrated that $\ell_1$ does not have the
invariant subspace property. All existing constructions of operators
on Banach spaces with no invariant subspaces are rather
sophisticated and artificial. On the other hand, examples of
separable non-complete normed spaces with or without the invariant
subspace property are easy to construct.

\begin{proposition}\label{prop1} Every normed space of countable
algebraic dimension does not have the invariant subspace property.
On the other hand, in every separable infinite dimensional Banach
space $B$, there is a dense linear subspace $X$ such that $X$ has
the invariant subspace property.
\end{proposition}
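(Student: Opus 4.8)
The plan for the first assertion is to reduce it to a statement about cyclic vectors. Note that $T\in L(X)$ has no nontrivial closed invariant subspace exactly when every nonzero $x\in X$ is cyclic, that is, the closed linear span of the orbit $\{T^nx:n\geq0\}$ is all of $X$: if some nonzero $x$ is not cyclic then this closed span is a nontrivial invariant subspace, and conversely any nonzero vector lying in a nontrivial closed invariant subspace has its orbit trapped inside that subspace and so is non-cyclic. Since $X$ has a countably infinite Hamel basis it is separable and, being infinite dimensional, necessarily incomplete; let $\widehat X$ be its completion, a separable infinite-dimensional Banach space. As $X$ is dense in $\widehat X$, every $T\in L(X)$ extends uniquely to $\widehat T\in L(\widehat X)$, and a vector is cyclic for $T$ in $X$ iff it is cyclic for $\widehat T$ in $\widehat X$. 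The goal thus becomes: produce $\widehat T\in L(\widehat X)$ with $\widehat T(X)\subseteq X$ for which every nonzero vector of $X$ is cyclic.

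To build such an operator I would choose a new Hamel basis $\{v_n\}_{n\geq0}$ of $X$ and let $T$ act as a forward shift $Tv_n=v_{n+1}$ (a weighted shift $Tv_n=c_nv_{n+1}$ if weights are needed to secure boundedness). The guiding model is multiplication by the variable on the polynomials $\C[z]$ sitting densely in $L^2[0,1]$: there $z^n\mapsto z^{n+1}$ is bounded, and every nonzero polynomial $p$ is cyclic because the closed span of $\{z^np\}$ equals $p\cdot\overline{\C[z]}$, which is dense since $p$ vanishes only on a null set. Crucially, although $\spann\{v_1,v_2,\dots\}$ is a proper algebraic subspace, its closure can be everything, which is what rescues the forward shift from the obvious invariant subspace. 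Transplanting this requires choosing the basis so that (a) the (weighted) shift is bounded for the given norm, and (b) for every nonzero finite combination the closed span of its shifted copies is all of $\widehat X$. The hard part is achieving (a) and (b) simultaneously for an arbitrary given norm: $\widehat X$ is an arbitrary separable Banach space, possibly one without the invariant subspace property (such as $\ell_1$), so I cannot hope that every vector of $\widehat X$ is cyclic. I would instead exploit that only the countably many basis vectors need be cyclic, and construct the basis by an inductive process enforcing one further cyclicity/density requirement at each stage while keeping the shift bounded, so that the non-cyclic vectors of $\widehat T$ avoid $X\setminus\{0\}$.

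For the second assertion the starting observation is the identification $L(X)=\{\widehat S\in L(B):\widehat S(X)\subseteq X\}$, valid whenever $X$ is dense in the Banach space $B$: every operator continuous for the subspace norm is bounded, hence extends to $B$ and preserves $X$, and conversely each such extension restricts. Thus $X$ has the invariant subspace property iff every bounded operator on $B$ preserving $X$ has a closed invariant subspace that meets $X$ nontrivially. The plan is to choose a dense subspace $X$ so rigid that the operators preserving it are all of a tractable type, for instance scalar-plus-compact or operators commuting with a fixed nonzero compact operator, so that Lomonosov's theorem (or Aronszajn--Smith in the purely compact case) supplies a nontrivial closed invariant subspace for each of them. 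Observe that $X$ must have uncountable algebraic dimension: otherwise the first part of the proposition would apply and destroy the invariant subspace property, so $X$ cannot be the span of a countable set and has to be a genuinely large dense subspace.

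The main obstacle here is twofold. First, one must actually construct the rigid dense subspace inside an arbitrary separable $B$, ensuring that enough undesirable operators fail to preserve $X$ while $X$ remains dense; I would attempt this by a transfinite construction adjoining one vector at a time and, at each step, destroying the operators one does not want to survive. Second, Lomonosov produces an invariant subspace $W$ inside $B$, and one must guarantee that $W\cap X$ is neither $\{0\}$ nor all of $X$, so that it is a genuine nontrivial closed invariant subspace of the normed space $X$; arranging this intersection condition uniformly over all admissible operators is the delicate point, and is where I expect most of the work to lie.
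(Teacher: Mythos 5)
Your reduction of the first assertion to cyclicity, and your identification $L(X)=\{\widehat S\in L(B):\widehat S(X)\subseteq X\}$ for the second, are both correct; the gaps are in the two constructions, and in each case the gap sits exactly where the paper invokes a known theorem. For the first half: once you impose the shift structure $Tv_n=v_{n+1}$ on a Hamel basis of $X$, every nonzero vector of $X$ equals $p(T)v_0$ for a nonzero polynomial $p$, so the vectors that must be made cyclic are indexed by the uncountably many nonzero polynomials, not by the countably many basis vectors; your statement that ``only the countably many basis vectors need be cyclic'' is false, and an induction imposing one cyclicity requirement per stage cannot meet uncountably many demands without a uniformity principle. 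Moreover, nothing in your sketch controls boundedness of the shift in the given, completely arbitrary, norm on $X$ --- that is the whole difficulty, not a detail. The paper resolves both points by citation rather than bare hands: Ansari and Bernal-Gonz\'alez give a hypercyclic operator $T$ on the completion $B$; Bourdon's theorem says $p(T)x$ is again hypercyclic (hence cyclic) for every nonzero polynomial $p$ --- this is precisely the uniformity principle your induction lacks --- so every nonzero vector of $Z=\spann\{T^nx:n\in\Z_+\}$ is cyclic; and Grivaux's theorem supplies an invertible $S\in L(B)$ with $S(Z)=X$, transferring the model subspace $Z$ onto the prescribed $X$, so that $STS^{-1}\bigr|_X$ is the desired operator. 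Without a transfer theorem of Grivaux's type, your plan has no mechanism for respecting the given norm.

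For the second half, the obstacle you flag yourself is fatal as the plan stands: Lomonosov applied to the extension gives a closed invariant subspace $W\subset B$, and while density of $X$ rules out $X\subseteq W$, nothing prevents $W\cap X=\{0\}$, and you propose no way to prevent it. The paper sidesteps extensions and Lomonosov entirely: by a result in Bonet--P\'erez-Carreras one can choose a dense subspace $X$ of $B$ (even a dense hyperplane) such that every $T\in L(X)$ has the form $\lambda I+S$ with $\dim S(X)<\infty$. Such a $T$ has an eigenvector inside $X$ itself: if $S=0$ this is trivial, and otherwise $S(X)$ is a nonzero finite-dimensional $S$-invariant subspace, hence contains an eigenvector $v$ of $S$ with $Sv=\mu v$, so $Tv=(\lambda+\mu)v$. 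The span of $v$ is a one-dimensional, hence closed, nontrivial invariant subspace of $X$, found directly inside $X$ with no intersection argument needed. So the rigidity one should aim for is ``scalar plus finite rank,'' not ``scalar plus compact'': a nonzero finite rank operator always has an eigenvector in $X$, whereas a compact one need not have any eigenvectors at all.
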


Thus completeness is an essential difficulty in constructing
operators with no non-trivial invariant subspaces. Countable
algebraic dimension is often perceived as almost incompatible with
completeness. Basically, there is only one complete topological
vector space of countable dimension, most analysts are aware of.
Namely, the locally convex direct sum $\phi$ (see \cite{rob}) of
countably many copies of the one-dimensional space $\C$ has
countable dimension and is complete. In other words, $\phi$ is a
vector space of countable dimension endowed with the topology
defined by the family of {\it all} seminorms. It is easy to see that
every linear subspace of $\phi$ is closed, which easily leads to the
following observation.

\begin{proposition}\label{prop2} The space $\phi$ has the invariant
subspace property.
\end{proposition}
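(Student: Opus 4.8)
The plan is to exploit the fact, recalled just above, that every linear subspace of $\phi$ is closed. Consequently the qualifier ``closed'' may be dropped throughout, and the assertion reduces to the purely algebraic statement that every $T\in L(\phi)$ admits a non-trivial invariant linear subspace. Since $\dim\phi=\aleph_0$, I would fix an arbitrary nonzero vector $v\in\phi$ and pass to the cyclic subspace
\[
Z=\spann\{T^nv:n\in\Z_+\}.
\]
This $Z$ is manifestly $T$-invariant and nonzero (it contains $v$), so the whole question comes down to a dichotomy according to whether or not $Z$ exhausts $\phi$.

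If $Z\neq\phi$, then $Z$ is already a non-trivial (closed) invariant subspace and there is nothing more to prove. The substance of the argument lies in the opposite case $Z=\phi$, which is where I expect the only genuine obstacle. Here I would first argue that the vectors $v,Tv,T^2v,\dots$ are linearly independent: a minimal linear relation among them would express some $T^nv$ as a combination of lower powers, and an easy induction would then confine every power $T^mv$ to a fixed finite-dimensional span, forcing $\dim Z<\infty$ and contradicting $Z=\phi$. Hence $\{T^nv\}_{n\geq 0}$ is an algebraic basis of $\phi$, relative to which $T$ is nothing but the unilateral shift $T^nv\mapsto T^{n+1}v$.

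With this normal form in hand the invariant subspace presents itself: take
\[
W=T(\phi)=\spann\{T^nv:n\geq 1\}.
\]
Being the range of $T$, it is automatically invariant. It is nonzero, because $Tv\neq 0$ (otherwise $Z$ would be the line $\spann\{v\}$, not the whole infinite-dimensional $\phi$), and it is proper, because $v$ does not lie in the span of $\{T^nv:n\geq 1\}$, by linear independence of the basis. Thus $W$ is a non-trivial invariant subspace, and being a subspace of $\phi$ it is closed, which finishes the argument. I would note that nothing about the topology of $\phi$ is used beyond the closedness of subspaces; the construction is entirely algebraic and in fact works for any linear operator on a vector space of countable dimension, so the only real content is the dichotomy above together with the reduction of the cyclic case to the shift.
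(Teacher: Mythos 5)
Your proof is correct and follows essentially the same route as the paper: the same dichotomy on the cyclic subspace $Z=\spann\{T^nv:n\in\Z_+\}$, the same linear-independence argument when $Z=\phi$, and the same final subspace $\spann\{T^nv:n\geq 1\}$ (which the paper describes as a hyperplane and you describe as the range of $T$ --- these coincide here). No gaps.
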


Contrary to the common perception, there is an abundance of complete
topological vector spaces of countable dimension. The main result of
this paper is the following theorem.

\begin{theorem}\label{main} There is a complete locally convex
topological vector space $X$ of countable algebraic dimension such
that $X$ does not have the invariant subspace property.
\end{theorem}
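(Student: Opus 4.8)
The plan is to realize the desired space as a space of polynomials carrying an unusual complete locally convex topology, to take $T$ to be multiplication by the variable, and thereby to reduce the whole problem to a density statement about principal ideals. I would first record the general principle that it suffices to produce a complete locally convex space $X$ of countable dimension together with $T\in L(X)$ for which \emph{every} non-zero vector is cyclic, meaning that $\{p(T)x:p\in\C[z]\}$ is dense in $X$ for each $x\neq0$. Indeed, if $M\neq\{0\}$ is a closed invariant subspace and $x\in M\setminus\{0\}$, then $M$ contains the whole orbit $\{p(T)x\}$ and hence its closure, which is all of $X$; thus $M=X$, and $X$ fails the invariant subspace property. In particular $T$ can then have no eigenvectors, which is consistent with what follows.

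\emph{The model.} I would take $X$ to be the vector space $\C[z]$ of polynomials, with Hamel basis $\{z^n:n\in\Z_+\}$ (so that $X$ has countable dimension), and let $Tp=zp$ be multiplication by $z$. Then the orbit of a polynomial $q$ is exactly the principal ideal $q\C[z]$, so the cyclicity requirement becomes the statement that every non-zero principal ideal is dense. By the Hahn--Banach theorem this is equivalent to the assertion that no non-zero continuous functional annihilates a non-zero principal ideal. Representing $f\in X'$ by the sequence $c_n=f(z^n)$, one checks that $f$ annihilates $q\C[z]$, with $q=\sum_{j=0}^d a_jz^j$, precisely when $\sum_{j=0}^d a_jc_{k+j}=0$ for all $k\in\Z_+$; that is, when $(c_n)$ satisfies the linear recurrence with characteristic polynomial $q$. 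Hence a non-zero functional annihilates some non-zero ideal if and only if the sequence $(c_n)$ satisfies a non-trivial finite linear recurrence, equivalently lies in the subspace $R\subseteq\C^{\Z_+}$ of sequences with rational generating function, spanned by the sequences $n^m\mu^n$. The entire construction therefore reduces to arranging the topology so that $X'\cap R=\{0\}$.

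\emph{Constructing the topology.} I would specify the topology through a dual pair: choose a subspace $Y\subseteq\C^{\Z_+}$ to serve as the intended dual and equip $X=\C[z]$ with the Mackey topology $\tau(X,Y)$. The demands on $Y$ are that it separate the points of $\C[z]$ (for Hausdorffness), that it be invariant under the backward shift $S(c_0,c_1,\dots)=(c_1,c_2,\dots)$ (since $T'$ acts on sequences as $S$, this makes $T$ continuous), that $Y\cap R=\{0\}$ (for cyclicity of every non-zero vector), and---most importantly---that the resulting space be complete. Since the sequences in $R$ grow at most like a polynomial times an exponential, the idea is to let $Y$ consist of shift-stable sequences that are, in a suitable quantitative sense, too large or too irregular to satisfy any finite recurrence, while still separating points; a weighted sequence space with rapidly growing weights, closed under $S$, is the natural candidate.

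\emph{The main obstacle.} The delicate point is completeness. A complete countable-dimensional locally convex space cannot be metrizable, since an infinite-dimensional Fr\'echet space has uncountable dimension by the Baire category theorem; so the topology must be genuinely non-metrizable and strictly coarser than the finest locally convex topology of $\phi$, for which $X'$ would be all of $\C^{\Z_+}\supseteq R$ and the invariant subspace property would hold by Proposition~\ref{prop2}. I would establish completeness via Grothendieck's completeness theorem, arranging $Y$ so that every linear functional on $Y$ that is weakly continuous on each equicontinuous set is represented by an element of $\C[z]$ itself, thereby forcing such functionals to have finite support in the basis $\{z^n\}$. Balancing this representability requirement, which wants $Y$ small and rigid, against separation and the exclusion $Y\cap R=\{0\}$, which want $Y$ large and generic, together with shift-invariance, is where essentially all the work lies; I expect the explicit construction of $Y$ and the verification of completeness to form the technical heart of the proof.
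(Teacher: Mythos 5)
Your reductions are correct as far as they go: it does suffice to make every non-zero vector cyclic; a functional $f$ with $c_n=f(z^n)$ annihilates the orbit of $q$ exactly when $(c_n)$ satisfies the recurrence with characteristic polynomial $q$; and continuity of multiplication by $z$ for the Mackey topology $\tau(X,Y)$ is equivalent to invariance of $Y$ under the backward shift. But the proof stops precisely where the real difficulty of the theorem begins: you never exhibit a subspace $Y\subseteq\C^{\Z_+}$ with the four required properties, and you never verify completeness. Your closing sentence concedes that the ``technical heart'' is still to be done, which means no proof of Theorem~\ref{main} has actually been given. The gap is not a formality. Completeness is not an invariant of the dual pair (different topologies compatible with $(X,Y)$ can differ in completeness), and completeness of a Mackey topology is a notoriously delicate matter: by Grothendieck's theorem you would need every linear functional on $Y$ that is $\sigma(Y,X)$-continuous on each absolutely convex $\sigma(Y,X)$-compact subset of $Y$ to be finitely supported, a strong rigidity demand that pulls in the opposite direction from point separation, shift-invariance, and the exclusion $Y\cap R=\{0\}$ (which, note, already forces $Y$ to contain no finitely supported sequences). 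Nothing in your proposal rules out that these requirements are mutually inconsistent, so even the existence of a suitable $Y$ is unestablished.

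It is instructive to see how the paper escapes exactly this trap: it never tries to prescribe a dual pair abstractly. Instead it realizes the space as (a hyperplane in) the free locally convex space $L_{\Z_\tau}$ over a countable completely regular topology $\tau$ on $\Z$, where Uspenskii's theorem converts completeness of $L_{\Z_\tau}$ into a purely topological condition on $\Z_\tau$ (no non-trivial convergent sequences), and identifies the dual concretely as $C(\Z_\tau)$. The operator is the lift of the translation $n\mapsto n+1$, and the analogue of your condition $Y\cap R=\{0\}$ becomes checkable: no character $n\mapsto z^n$, $z\in\C\setminus\{0,1\}$, may be $\tau$-continuous, together with density of $\Z_+$. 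A topology with all these properties is then produced explicitly by transferring the weak topology of $\ell_2(\Z)$ to $\Z$ along the orbit of a weakly hypercyclic bilateral weighted shift (Chan--Sanders), with a universality result killing the unimodular characters. In other words, the paper replaces the step you left open by a known completeness criterion plus a known hypercyclicity construction; supplying something of comparable substance is what your proposal would need to become a proof.
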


In other words, Theorem~\ref{main} provides a complete locally
convex topological vector space $X$ with $\dim X=\aleph_0$ and $T\in
L(X)$ such that $T$ does not have non-trivial closed invariant
subspaces.

\section{Proof of Propositions~\ref{prop1} and~\ref{prop2}}

Although Propositions~\ref{prop1} and~\ref{prop2} are certainly
known facts, we do not know whether they can be found in the
literature or whether they are folklore. Granted that their proofs
are fairly elementary and short, we present them for the sake of
convenience.

\begin{proof}[Proof of Proposition~$\ref{prop1}$] First, assume that
$X$ is a normed space of countable dimension. Let $B$ be the
completion of $X$. Then $B$ is a separable infinite dimensional
Banach space. According to Ansari \cite{ansa1} and
Bernal--Gonz\'ales \cite{bernal}, there is a hypercyclic $T\in
L(B)$. That is, there is $x\in B$ such that $\{T^nx:n\in\Z_+\}$ is
dense in $B$. Let $Z$ be the linear span of $T^nx$ for $n\in\Z_+$.
Since $Z$ and $X$ are both dense linear subspaces of $B$, according
to Grivaux \cite{gri2}, there is an invertible $S\in L(B)$ such that
$S(Z)=X$. Since $Z$ is invariant for $T$, $X$ is invariant for
$STS^{-1}$. That is, the restriction $A=STS^{-1}\bigr|_{X}$ belongs
to $L(X)$. Let $u\in X\setminus\{0\}$. Then $S^{-1}u$ is a non-zero
vector in $Z$ and therefore $S^{-1}u=p(T)x$, where $p$ is a non-zero
polynomial. Due to Bourdon \cite{bourd}, $S^{-1}u=p(T)x$ is also a
hypercyclic and therefore cyclic vector for $T$. By similarity, $u$
is a cyclic vector for $STS^{-1}$ and therefore for $A$. Thus every
non-zero vector in $X$ is cyclic for $A$. That is, $A$ has no
non-trivial closed invariant subspaces.

Next, let $B$ be a separable infinite dimensional invariant
subspace. Then there is a dense linear subspace $X$ of $B$ ($X$ can
even be chosen to be a hyperplane \cite{bonet}) such that every
$T\in L(X)$ has the shape $\lambda I+S$ with $\lambda\in \C$ and
$\dim S(X)<\infty$. Trivially, such a $T$ has a one-dimensional
invariant subspace.
\end{proof}

\begin{proof}[Proof of Proposition~$\ref{prop2}$] Let $T\in L(\phi)$
and $x\in \phi\setminus\{0\}$. Then the linear span $L$ of
$\{T^nx:n\in\Z_+\}$ is an (automatically closed) invariant subspace
of $T$ different from $\{0\}$. If $L\neq \phi$, we are done. If
$L=\phi$, then the vectors $T^nx$ are linearly independent
(otherwise $L$ is finite dimensional). Hence the linear span $L_1$
of $\{T^nx:n\in\N\}$ is a hyperplane in $L$. Clearly $L_1$ is
$T$-invariant and non-trivial.
\end{proof}

\section{Proof of Theorem~\ref{main}}

We shall construct an operator $T$ with no non-trivial invariant
subspaces, needed in order to prove Theorem~\ref{main}, by lifting a
non-linear map on a topological space to a linear map on an
appropriate topological vector space.

\subsection{A class of complete countably dimensional spaces}

Recall that a topological space $X$ is called {\it completely
regular} (or {\it Tychonoff}) if for every $x\in X$ and a closed
subset $F\subset X$ satisfying $x\notin F$, there is a continuous
$f:X\to \R$ such that $f(x)=1$ and $f\bigr|_F=0$. Equivalently, a
topological space is completely regular, if its topology can be
defined by a family of pseudometrics. Note that any subspace of a
completely regular space is completely regular and that every
topological group is completely regular.

Our construction is based upon the concept of the {\it free locally
convex space} \cite{usp}. Let $X$ be a completely regular
topological space. We say that a topological vector space $L_X$ is a
free locally convex space of $X$ if $L_X$ is locally convex,
contains $X$ as a subset with the topology induced from $L_X$ to $X$
being the original topology of $X$ and for every continuous map $f$
from $X$ to a locally convex space $Y$ there is a unique continuous
linear operator $T:L_X\to Y$ such  that $T\bigr|_X=f$. It turns out
that for every completely regular topological space $X$, there is a
free locally convex space $L_X$ unique up to an isomorphism leaving
points of $X$ invariant. Thus we can speak of {\it the} free locally
convex space $L_X$ of $X$. Note that $X$ is always a Hamel basis in
$L_X$. Thus, as a vector space, $L_X$ consists of formal finite
linear combinations of elements of $X$. Identifying $x\in X$ with
the point mass measure $\delta_x$ on $X$ ($\delta_x(A)=1$ if $x\in
A$ and $\delta_x(A)=0$ if $x\notin A$), we can also think of
elements of $L_X$ as measures with finite support on the
$\sigma$-algebra of all subsets of $X$. Under this interpretation
$$
L_X^0=\{\mu\in L_X:\mu(X)=0\}
$$
is a closed hyperplane in the locally convex space $L_X$. If $f:X\to
X$ is a continuous map, from the definition of the free locally
convex space it follows that $f$ extends uniquely to a continuous
linear operator $T_f\in L(L_X)$. It is also clear that $L_X^0$ is
invariant for $T_f$. Thus the restriction $S_f$ of $T_f$ to $L_X^0$
belongs to $L(L_X^0)$.

According to Uspenskii \cite{usp}, $L_X$ is complete if and only if
$X$ is Dieudonne complete and every compact subset of $X$ is finite.
Since Dieudonne completeness follows from paracompactness, every
regular countable topological space is Dieudonne complete. Since
every countable compact topological space is metrizable, for a
countable $X$, finiteness of compact subsets is equivalent to the
absence of non-trivial convergent sequences (a convergent sequence
is trivial if it is eventually stabilizing). Note also that a
regular countable topological space is automatically completely
regular and therefore we can safely replace the term 'completely
regular' by 'regular' in the context of countable spaces. Thus we
can formulate the following corollary of the Uspenskii theorem.

\begin{proposition}\label{prop3} Let $X$ be a regular countable
topological space. Then the countably dimensional locally convex
topological vector spaces $L_X$ and $L_X^0$ are complete if and only
if there are no non-trivial convergent sequences in $X$.
\end{proposition}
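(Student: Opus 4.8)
The plan is to read the statement as an essentially direct corollary of the Uspenskii theorem quoted above, combined with the three elementary topological observations recorded just before the proposition. Since the text has already noted that a regular countable space is completely regular (so that the free locally convex space is available), that it is Dieudonné complete, and that every countable compact space is metrizable, the only genuinely new ingredients are (i) the equivalence between finiteness of all compact subsets of $X$ and the absence of non-trivial convergent sequences, and (ii) the transfer of completeness between $L_X$ and its hyperplane $L_X^0$. I would organise the argument around these two points, characterising completeness of $L_X$ first and then deducing the case of $L_X^0$; the countable dimension of both spaces is immediate from the fact that $X$ is a countable Hamel basis of $L_X$.

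For the reduction to $L_X$, I would first observe that the constant function $1$ on $X$ is continuous and therefore, by the universal property of $L_X$, extends to a continuous linear functional; on the Hamel basis this extension sends $\delta_x\mapsto 1$, so it is precisely the total-mass functional $\mu\mapsto\mu(X)$, whose kernel is $L_X^0$. Fixing any $x_0\in X$, the map $P\mu=\mu(X)\delta_{x_0}$ is then a continuous projection of $L_X$ onto the line $\C\delta_{x_0}$ with kernel $L_X^0$, so $L_X$ splits topologically as $L_X^0\oplus\C\delta_{x_0}$. Because $\C$ is complete, a finite topological product of complete spaces is complete, and closed subspaces of complete spaces are complete; hence $L_X$ is complete if and only if $L_X^0$ is. Thus it suffices to characterise completeness of $L_X$, and by the Uspenskii theorem this amounts, Dieudonné completeness being automatic here, to the single condition that every compact subset of $X$ be finite.

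It then remains to identify this condition with the absence of non-trivial convergent sequences. One direction is immediate: if $(x_n)$ converges to $x$ without being eventually constant, then $\{x\}\cup\{x_n:n\in\N\}$ is an infinite compact subset of $X$. For the converse I would argue by contraposition: an infinite compact subset $K\subset X$ is countable and compact, hence metrizable by the quoted fact, and an infinite compact metric space possesses an accumulation point, from which one extracts a sequence of distinct points converging to it; this is a non-trivial convergent sequence in $K$ and hence in $X$. Assembling these pieces gives the stated equivalence for $L_X$, and the splitting above transfers it verbatim to $L_X^0$. I expect the only place calling for genuine care to be this last topological equivalence: one must ensure that the extracted sequence is really non-trivial (the accumulation point is non-isolated in $K$, so distinct nearby points are available) and that convergence in the subspace $K$ coincides with convergence in $X$. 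Everything else is bookkeeping over facts already established in the preceding discussion.
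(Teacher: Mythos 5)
Your proposal is correct and follows essentially the same route as the paper: Proposition~3 is presented there as a direct corollary of Uspenskii's completeness criterion, with Dieudonn\'e completeness automatic for regular countable spaces and finiteness of compact subsets equivalent to the absence of non-trivial convergent sequences via metrizability of countable compact spaces. The only difference is that you spell out two points the paper leaves implicit---the topological splitting $L_X\cong L_X^0\oplus\C\delta_{x_0}$ that transfers completeness to the hyperplane, and the details of the compact-set/convergent-sequence equivalence---which is a fleshing-out of the same argument rather than a different approach.
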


The above proposition provides plenty of complete locally convex
spaces of countable algebraic dimension. We also need the shape of
the dual space of $L_X$. As shown in \cite{usp}, $L_X'$ can be
identified with the space $C(X)$ of continuous scalar valued
functions on $X$ in the following way. Every $f\in C(X)$ produces a
continuous linear functional on $L_X$ in the usual way:
$$
\langle f,\mu\rangle=\int f\,d\mu=\sum c_jf(x_j),\ \ \text{where}\ \
\mu=\sum c_j\delta_{x_j}
$$
and there are no other continuous linear functionals on $L_X$.

\subsection{Operators $S_f$ with no invariant subspaces}

The following lemma is the main tool in the proof of
Theorem~\ref{main}.

\begin{lemma}\label{m1}Let $\tau$ be a regular topology on $\Z$ such that
$f:\Z\to \Z$, $f(n)=n+1$ is a homeomorphism of $\Z_\tau=(\Z,\tau)$
onto itself, $\Z_+$ is dense in $\Z_\tau$ and for every
$z\in\C\setminus\{0,1\}$, $n\mapsto z^n$ is non-continuous as a map
from $\Z_\tau$ to $\C$. Then the operators $T_f$ and $S_f$ are
invertible continuous linear operators on $L_{\Z_\tau}$ and
$L^0_{\Z_\tau}$ respectively and $S_f$ has no non-trivial closed
invariant subspaces.
\end{lemma}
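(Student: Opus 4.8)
The plan is to treat invertibility quickly and then to establish the absence of invariant subspaces by proving that \emph{every} non-zero vector of $L^0_{\Z_\tau}$ is cyclic for $S_f$. For invertibility I would invoke the universal property of the free locally convex space: since $f$ is a homeomorphism, $f^{-1}$ is continuous and extends to $T_{f^{-1}}\in L(L_{\Z_\tau})$, and $T_fT_{f^{-1}}$, $T_{f^{-1}}T_f$ agree with the identity on the Hamel basis $\Z_\tau$, hence are the identity by uniqueness of the continuous linear extension. Thus $T_f^{-1}=T_{f^{-1}}$. As $T_f\delta_m=\delta_{m+1}$ leaves the total mass $\mu\mapsto\mu(\Z)$ unchanged, $L^0_{\Z_\tau}$ is invariant under $T_f$ and $T_{f^{-1}}$, so $S_f$ is invertible on $L^0_{\Z_\tau}$.

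For the main claim I use that a functional $g\in C(\Z_\tau)=L_{\Z_\tau}'$ annihilates $L^0_{\Z_\tau}$ exactly when $g$ is constant, so $(L^0_{\Z_\tau})'=C(\Z_\tau)/\C$. By the Hahn--Banach density criterion it suffices to show: if $\mu=\sum_jc_j\delta_{m_j}\neq0$ with $\sum_jc_j=0$, and $g\in C(\Z_\tau)$ satisfies $\langle g,S_f^n\mu\rangle=0$ for every $n\in\Z_+$, then $g$ is constant. Setting $h(n)=\langle g,S_f^n\mu\rangle=\sum_jc_jg(m_j+n)$, I note that $h=\sum_jc_j\,g\circ f^{m_j}$ is continuous on $\Z_\tau$ and vanishes on the dense set $\Z_+$, hence $h\equiv0$ on all of $\Z$. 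This is precisely where density of $\Z_+$ enters. Consequently $g$ lies in the finite-dimensional space $V=\ker P(\sigma)\subset\C^{\Z}$, where $\sigma g=g\circ f$ is the shift and $P(\sigma)=\sum_jc_j\sigma^{m_j}$.

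The heart of the proof is to identify which $g\in V$ are continuous. Factoring out the lowest power gives $V=\ker Q(\sigma)$ with $Q$ an ordinary polynomial, $Q(0)\neq0$, so $V=\bigoplus_\lambda V_\lambda$ splits over the non-zero roots $\lambda$ into generalized eigenspaces $V_\lambda=\ker(\sigma-\lambda)^{d_\lambda}$, each a single Jordan chain spanned by $\lambda^n,n\lambda^n,\dots$ with bottom (eigen)vector $\lambda^n$. Since $f$ is a homeomorphism, $V_c=V\cap C(\Z_\tau)$ is $\sigma$-invariant, and as the spectral projections are polynomials in $\sigma$ they preserve $V_c$; thus $V_c=\bigoplus_\lambda(V_c\cap V_\lambda)$. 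Any non-zero $\sigma$-invariant subspace of a single Jordan block contains its bottom vector, so $V_c\cap V_\lambda\neq\{0\}$ would force $\lambda^n\in C(\Z_\tau)$. For $\lambda\notin\{0,1\}$ this is forbidden by hypothesis. The remaining case $\lambda=1$ is the delicate one: the constant $1$ is harmless, but $V_c\cap V_1$ strictly larger than $\C\cdot1$ would contain $n\mapsto n$, and I would exclude this separately by observing that continuity of $\iota(n)=n$ makes $\{0\}=\iota^{-1}(\{|w|<\tfrac12\})$ open, whence every singleton is open via the homeomorphism $f$, so $\tau$ is discrete and \emph{every} $z^n$ is continuous, contradicting the hypothesis. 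Hence $V_c=\C\cdot1$, $g$ is constant, the forward orbit of $\mu$ spans a dense subspace, and every non-zero vector is cyclic; a non-zero closed invariant subspace thus equals $L^0_{\Z_\tau}$. I expect the main obstacle to be exactly this continuity analysis of $V$---matching the finite-dimensional recurrence structure against the topological constraint---resolved by the Jordan-block reduction together with the discreteness argument for $\lambda=1$.
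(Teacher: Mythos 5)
Your proposal is correct, and its skeleton is the same as the paper's: invertibility via $T_{f^{-1}}$, the identification of the dual of $L^0_{\Z_\tau}$ with $C(\Z_\tau)$ modulo constants, a Hahn--Banach argument, and the use of density of $\Z_+$ to conclude that the annihilating function $g$ satisfies a linear recurrence $P(\sigma)g=0$, where $\sigma h=h\circ f$. The difference is in the endgame, and it matters. The paper, from ``$p(T_f)'g=0$ with $g$ non-constant'', asserts that it \emph{immediately} follows that $T_f'$ has a \emph{non-constant} continuous eigenvector $h$, whence $h(n)=z^n$ with $z\notin\{0,1\}$ is continuous, a contradiction. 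That inference is not valid as stated: if $g$ lies in the generalized eigenspace of the eigenvalue $1$ (for instance $g(n)=a+bn$ with $b\neq 0$, which is exactly what $\ker(\sigma-1)^2$ allows when $\mu=\delta_2-2\delta_1+\delta_0$), then every eigenvector of $\sigma$ in the cyclic subspace generated by $g$ is constant, and no contradiction with the hypothesis on the maps $n\mapsto z^n$ arises directly. Your Jordan-block decomposition isolates precisely this case ($\lambda=1$, block of size at least $2$), and your discreteness argument --- continuity of $n\mapsto n$ makes $\{0\}$ open, hence by the homeomorphism $f$ all singletons are open, hence $\tau$ is discrete and every $z^n$ is continuous (alternatively, $\Z_+$ could not then be dense in $\Z$) --- is exactly the missing step that closes it. So your route is not merely equivalent: at the one delicate point of the argument it is more careful than the paper's own proof, which compresses this case analysis into the unjustified phrase ``it immediately follows''.
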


\begin{proof} We already know that $T_f$ and $S_f$ are continuous
linear operators. It is easy to see that $T_f^{-1}=T_{f^{-1}}$ and
$S_f^{-1}=S_{f^{-1}}$. Since $f^{-1}$ is also continuous, $T_f$ and
$S_f$ have continuous inverses.

Now let $\mu\in L_{\Z_\tau}\setminus\{0\}$. It remains to show that
$\mu$ is a cyclic vector for $S_f$. Assume the contrary. Then there
is a non-constant $g\in C(\Z_\tau)$ such that $\langle
S_f^n\mu,g\rangle=\langle T_f^n\mu,g\rangle=0$ for every $n\in\Z_+$.
Decomposing $\mu$ as a linear combination of point mass measures, we
have $\mu=\sum\limits_{k=-l}^l c_k\delta_k$ with $c_k\in\C$. Then
$\mu=\sum\limits_{j=0}^{2l}c_{j-k}T_f^{j}\delta_{-l}=p(T_f)\delta_{-l}$,
where $p$ is a non-zero polynomial. Then $0=\langle
T_f^n\mu,g\rangle=\langle T_f^n\delta_{-l},p(T_f)'g\rangle$ for
$n\in\Z_+$. Thus the functional $p(T_f)'g$ vanishes on the linear
span of $T_f^n\delta_{-l}$ with $n\in\Z_+$, which contains the
linear span of $\Z_+$ in $L_{\Z_\tau}$. Since $\Z_+$ is dense in
$\Z_\tau$, $p(T_f)'g$ vanishes on a dense linear subspace and
therefore $p(T_f)'g=0$. It immediately follows that $T_f'$ has an
eigenvector, which is given by a non-constant function $h\in C(X)$:
$T_f'h=zh$ for some $z\in\C$. Since $T_f$ is invertible, so is
$T_f'$ and therefore $z\neq 0$. It is easy to see that
$T_f'h(n)=h(n+1)$ for each $n\in\Z$. Thus the equality $T_f'h=zh$
implies that (up to a multiplication by a non-zero constant)
$h(n)=z^n$ for each $n\in\Z$. Since $h$ is non-constant, $z\neq 1$.
Thus the map $n\mapsto z^n$ is continuous on $\Z_\tau$ for some
$z\in\C\setminus\{0,1\}$. We have arrive to a contradiction.
\end{proof}

\subsection{A specific countable topological space}

\begin{lemma}\label{m2} There exists a regular topology $\tau$ on $\Z$ such
that the topological space $\Z_\tau=(\Z,\tau)$ has the following
properties
\begin{itemize}\itemsep-2pt
\item[\rm (a)]$f:\Z\to \Z$, $f(n)=n+1$ is a homeomorphism of $\Z_\tau$ onto
itself$\,;$
\item[\rm (b)]$\Z_+$ is dense in $\Z_\tau;$
\item[\rm (c)]for every $z\in\C\setminus\{0,1\}$,
$n\mapsto z^n$ is non-continuous as a map from $\Z_\tau$ to $\C;$
\item[\rm (d)]$\Z_\tau$ has no non-trivial convergent sequences.
\end{itemize}
\end{lemma}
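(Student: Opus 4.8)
The plan is to realize $\tau$ as a Hausdorff group topology on the additive group $\Z$. Two of the requirements then come for free: translations in a topological group are homeomorphisms, so (a) holds automatically, and since every topological group is completely regular, $\Z_\tau$ is regular as demanded. It remains to secure (b), (c), (d), and I would first translate each of them into a condition on a neighbourhood base $\mathcal B$ at $0$. Here $\mathcal B$ is to consist of symmetric sets $U=-U$ with $0\in U$ such that for every $U\in\mathcal B$ there is $V\in\mathcal B$ with $V+V\subseteq U$; such a $\mathcal B$ generates a group topology, which is Hausdorff precisely when $\bigcap_{U\in\mathcal B}U=\{0\}$.

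The reductions are short. Writing $m\in\overline{\Z_+}$ as the statement that every neighbourhood of $m$ meets $\Z_+$, one checks that (b) holds for all $m$ exactly when every $U\in\mathcal B$ contains arbitrarily large positive integers; being symmetric, such a $U$ is then unbounded in both directions. For (c), note that $n\mapsto z^n$ is a homomorphism of $\Z$ into $(\C\setminus\{0\},\cdot)$, so by translation invariance its continuity is equivalent to continuity at $0$. If $|z|\neq1$ then continuity of $n\mapsto z^n$ would make $n\mapsto|z|^n$ a nontrivial continuous homomorphism into $\R_{>0}$, forcing $\{0\}$ to be open and $\tau$ discrete; so as soon as $\tau$ is non-discrete every such $z$ is handled. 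For $z\in\T\setminus\{1\}$ the map $n\mapsto z^n$ is exactly a nontrivial continuous character, so (c) reduces to: $\tau$ is non-discrete and $\Z_\tau$ carries no nontrivial continuous character. Finally, using Hausdorffness a nontrivial convergent sequence yields, after translating to its limit and discarding repetitions, an injective sequence in $\Z\setminus\{0\}$ converging to $0$; thus (d) reduces to arranging, for each injective $\sigma:\N\to\Z\setminus\{0\}$, some $U\in\mathcal B$ that omits infinitely many terms $\sigma(k)$.

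With these reformulations I would construct $\mathcal B$ by transfinite recursion of length $\mathfrak c$, enumerating all the obstructions: the continuum-many characters $n\mapsto e^{2\pi itn}$ with $t\in(0,1)$, the continuum-many injective sequences $\sigma$, and the countably many nonzero integers that must eventually be excluded. At a stage treating a character I would add a symmetric unbounded base set containing an integer $n$ with $\|tn\|\geq1/4$ (distance to the nearest integer), so that no member of the final base lies inside an arbitrarily small Bohr set $\{n:\|tn\|<\delta\}$ and the character stays discontinuous; at a stage treating a sequence I would thin so that some base set omits infinitely many of its terms; at a stage treating an integer $m\neq0$ I would insert a base set avoiding $m$. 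Throughout I must respect the halving condition $V+V\subseteq U$ and preserve unboundedness, and since there are only $\mathfrak c$ tasks the recursion closes.

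The hard part is the structural tension between the two faces of the problem. The group axiom forces $\mathcal B$ to be closed under halving, and iterating $V+V\subseteq U$ makes every neighbourhood contain large iterated sumsets, which by Bogolyubov-type phenomena tend to be additively rich; condition (b) pulls in the same direction by insisting the neighbourhoods be unbounded. Against this, (c) forbids any neighbourhood from being \emph{contained} in a small Bohr set, while Hausdorffness and (d) demand that the base nevertheless shrink to $\{0\}$ and trap no sequence. Reconciling ``every neighbourhood is large and additively structured'' with ``no single character or sequence is respected by the whole filter, and the overall intersection is trivial'' is the crux, and it is exactly here that the topology must be non-metrizable, so that $\mathcal B$ has uncountable character and no sequence can converge. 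I expect to manage it by keeping the base only as structured as the halving literally requires and by using equidistribution to make each base set meet the complement of every Bohr set $\{n:\|tn\|<1/4\}$, so that no character is dominated by the filter even though each neighbourhood is forced to be large.
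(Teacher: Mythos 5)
Your reductions of (a)--(d) to conditions on a symmetric neighbourhood base $\mathcal B$ at $0$ are correct, but the construction you sketch does not close, and the gap sits exactly where you yourself locate ``the hard part.'' The trouble is a quantifier mismatch: condition (c) is not an obstruction that can be discharged once, at a single stage of the recursion. Discontinuity of the character $n\mapsto e^{2\pi i t n}$ means that there is a fixed $\delta>0$ such that \emph{every} member of the final base contains a point $n$ with $\|tn\|\geq\delta$; equivalently, no basic neighbourhood ever lies inside the Bohr set $\{n:\|tn\|<\delta\}$. Inserting, at the stage devoted to $t$, one unbounded symmetric set escaping that Bohr set accomplishes nothing, because the sets added at the continuum many later stages are smaller, and any one of them may fall inside $\{n:\|tn\|<\delta\}$. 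So (c) is an invariant that all later stages must preserve simultaneously for all continuum many characters, while those same stages are busy shrinking the base to secure Hausdorffness and (d) and maintaining halving and directedness. What your plan needs, and does not contain, is an extension lemma: given a partial base whose members are unbounded, escape every Bohr set, and admit halving, one can always add a strictly smaller set with the same properties that moreover avoids a prescribed nonzero integer or misses infinitely many terms of a prescribed injective sequence. You offer no argument for this; your closing sentence (``I expect to manage it'') concedes that the crux is unresolved, and your own remark about iterated sumsets being additively rich is precisely the reason the consistency of the invariant is the whole problem rather than a detail.

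Moreover, by insisting that $\tau$ be a group topology you have made the task strictly harder than the lemma: the paper's $\tau$ is deliberately \emph{not} a group topology (addition on $\Z_\tau$ is only separately continuous), and the paper explicitly poses the existence of a group topology with properties (a)--(d) as an open question. What you are attempting amounts to a minimally almost periodic Hausdorff group topology on $\Z$ without nontrivial convergent sequences; the known ways of killing all characters of $\Z$ (Ajtai--Havas--Koml\'os type constructions, topologies generated by $T$-sequences) do so exactly by forcing suitable sequences to converge to $0$, which (d) forbids. The paper sidesteps the group structure entirely: it takes a bilateral weighted shift $T$ on $\ell_2(\Z)$ which is weakly hypercyclic with weakly hypercyclic vector $x$ (Chan--Sanders), and transfers the weak topology of the orbit $Y=\{T^nx:n\in\Z\}$ to $\Z$ via the bijection $n\mapsto T^nx$. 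Then (a) holds because $T$ is a weak homeomorphism preserving $Y$, (b) because the forward orbit is weakly dense, (d) because $\|T^nx\|$ is strictly increasing and unbounded while weakly convergent sequences are norm bounded, and (c) via a discreteness argument for $|z|\neq1$ and, for $|z|=1$, via a simultaneous density theorem showing $\{(T^nx,z^n):n\in\Z_+\}$ is dense in $\hh_\sigma\times G$. To salvage your route you would have to prove the extension lemma above --- thereby answering the paper's open question --- or else give up joint continuity of addition, at which point an operator-theoretic transfer of this kind is the natural device.
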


\begin{proof}Consider the Hilbert space $\ell_2(\Z)$ and the
bilateral weighted shift $T\in L(\ell_2(\Z))$ given by
$Te_n=e_{n-1}$ if $n\leq 0$ and $Te_n=2e_{n-1}$ if $n>0$, where
$\{e_n\}_{n\in\Z}$ is the canonical orthonormal basis in
$\ell_2(\Z)$. Symbol $\hh_\sigma$ stands for $\ell_2(\Z)$ equipped
with its weak topology $\sigma$. Clearly $T$ is invertible and
therefore $T$ is a homeomorphism on $\hh_\sigma$. According to Chan
and Sanders, there is $x\in\ell_2(\Z)$ such that the set
$O=\{T^nx:n\in\Z_+\}$ is dense in $\hh_\sigma$. Then
$Y=\{T^nx:n\in\Z\}$ is also dense in $\hh_\sigma$. We equip $Y$ with
the topology inherited from $\hh_\sigma$ and transfer it to $\Z$ by
declaring the bijection $n\mapsto T^nx$ from $\Z$ to $Y$ a
homeomorphism.

Since $\sigma$ is a completely regular topology, so is the just
defined topology $\tau$ on $\Z$. Since $T$ is a homeomorphism on
$(\ell_2(\Z),\sigma)$ and $Y$ is a subspace of the topological space
$(\ell_2(\Z),\sigma)$ invariant for both $T$ and $T^{-1}$, $T$ is a
homeomorphism on $Y$. Since $T(T^nx)=T^{n+1}x$, it follows that $f$
 is a homeomorphism on $\Z_\tau$. Density of $O$ in $Y$ implies the density
of $\Z_+$ in $\Z_\tau$.

Observe that the sequence $\{\|T^nx\|\}_{n\in\Z}$ is strictly
increasing and $\|T^nx\|\to\infty$ as $n\to+\infty$. Indeed, the
inequality $\|Tu\|\geq \|u\|$ for $u\in\ell_2(\Z)$ follows from the
definition of $T$. Hence $\{\|T^nx\|\}_{n\in\Z}$ is increasing.
Assume that $\|T^{n+1}x\|=\|T^nx\|$ for some $n\in\Z$. Then, by
definition of $T$, $T^nx$ belongs to the closed linear span $L$ of
$e_n$ with $n<0$. The latter is invariant for $T$ and therefore
$T^mx\in L$ for $m\geq n$, which is incompatible with the
$\sigma$-density of $O$. Next, if $\|T^nx\|$ does not tend to
$\infty$ as $n\to+\infty$, the sequence $\{\|T^nx\|\}_{n\in\Z_+}$ is
bounded. Since every bounded subset of $\ell_2(\Z)$ is
$\sigma$-nowhere dense, we have again obtained a contradiction with
the $\sigma$-density of $O$.

In order to show that $X$ has no non-trivial convergent sequences,
it suffices to show that $Y$ has no non-trivial convergent
sequences. Assume that $\{T^{n_k}x\}_{k\in\Z_+}$ is a non-trivial
convergent sequence in $Y$. Without loss of generality, we can
assume that the sequence $\{n_k\}$ of integers is either strictly
increasing or strictly decreasing. If $\{n_k\}$ is strictly
increasing the above observation ensures that
$\|T^{n_k}x\|\to\infty$ as $k\to\infty$. Since every
$\sigma$-convergent sequence is bounded, we have arrived to a
contradiction. If $\{n_k\}$ is strictly decreasing, then by the
above observation, the sequence $\{\|T^{n_k}x\|\}$ of positive
numbers is also strictly decreasing and therefore converges to
$c\geq 0$. Then $\|T^lx\|>c$ for every $l\in\Z$. Since
$\{T^{n_k}x\}$ $\sigma$-converges to $T^mx\in Y$, the upper
semicontinuity of the norm function with respect to $\sigma$ implies
that $\|T^mx\|\leq c$ and we have arrived to a contradiction.

Finally, let $z\in\C\setminus\{0,1\}$ and $f:\Z\to \C$, $f(n)=z^n$.
It remains to show that $f$ is not continuous as a function on
$\Z_\tau$. Equivalently, it is enough to show that the function
$g:Y\to\C$, $g(T^nx)=z^n$ is non-continuous. Assume the contrary.
There are two possibilities. First, consider the case $|z|\neq 1$.
In this case the the topology on the set $M=\{z^n:n\in\Z\}$
inherited from $\C$ is the discrete topology. Continuity of the
bijection $g:Y\to M$ implies then that $Y$ is also discrete, which
is apparently not the case: the density of $Y$ in $\hh_\sigma$
ensures that $Y$ has no isolated points. It remains to consider the
case $|z|=1$, $z\neq1$. In this case the closure $G$ of $\{z^n:n\in
\Z\}$ is a closed subgroup of the compact abelian topological group
$\T$. Since $x$ is a hypercyclic vector for $T$ acting on
$\hh_\sigma$ and $z$ generates the compact abelian topological group
$G$, \cite[Corollary~4.1]{66} implies that $\{(T^nx,z^n):n\in\Z_+\}$
is dense in $\hh_\sigma\times G$. It follows that the graph of $g$
is dense in $Y\times G$. Since $G$ is not a single-point space, the
latter is incompatible with the continuity of $g$. The proof is now
complete.
\end{proof}

\begin{proof}[Proof of Theorem~$\ref{main}$] Let $\tau$ be
the topology on $\Z$ provided by Lemma~\ref{m2}. By
Proposition~\ref{prop3}, $E=L^0_{\Z_\tau}$ is a complete locally
convex space of countable algebraic dimension. Let $f:\Z\to\Z$,
$f(n)=n+1$ and $S=S_f\in L(E)$. By Lemmas~\ref{m1} and~\ref{m2}, $S$
is invertible and has no non-trivial invariant subspaces. The proof
is complete (with an added bonus of invertibility of $S$).
\end{proof}

It is easy to see that the topology $\tau$ on $\Z$ constructed in
the proof of Lemma~\ref{m2} does not agree with the group structure.
That is $\Z_\tau$ is not a topological group. Indeed, it is easy to
see that the group operation $+$ is only separately continuous on
$\Z_\tau$, but not jointly continuous. As a matter of curiosity, it
would be interesting to find out whether there exists a topology
$\tau$ on $\Z$ satisfying all conditions of Lemma~\ref{m2} and
turning $\Z$ into a topological group.

\small\rm

\vskip1truecm

\scshape

\noindent Stanislav Shkarin

\noindent Queens's University Belfast

\noindent Department of Pure Mathematics

\noindent University road, Belfast, BT7 1NN, UK

\noindent E-mail address: \qquad {\tt s.shkarin@qub.ac.uk}

\end{document}